\newtheorem{lemma}{Lemma}
\newtheorem{corollary}{Corollary}
\newtheorem{proposition}{Proposition}
\newtheorem{example}{Example}
\newtheorem{theorem}{Theorem}
\newtheorem{question}{Question}
\newcommand{\Z}{\mathbb{Z}}
\newcommand{\N}{\mathbb{N}}
\newcommand{\lang}{\mathcal{L}}
\newcommand{\supp}{\mbox{supp}}
\newcommand{\tower}{\mbox{tower}}
\title{On Nilpotency and Asymptotic Nilpotency of Cellular Automata\thanks{Research supported by the Academy of Finland Grant 131558}}
\author{
	Ville Salo
		\institute{TUCS -- Turku Center for Computer Science, \\
		University of Turku, Finland}
		\email{vosalo@utu.fi}
}
\begin{document}
\maketitle

\begin{abstract}
We prove a conjecture from [Guillon-Richard '08] by showing that cellular automata that eventually fix all cells to a fixed symbol $0$ are nilpotent on $S^{\Z^d}$ for all $d$. We also briefly discuss nilpotency on other subshifts, and show that weak nilpotency implies nilpotency in all subshifts and all dimensions, since we do not know a published reference for this.
\end{abstract}

\section{Introduction}

One of the most interesting aspects in the theory of cellular automata is the study of different types of nilpotency, that is, different ways in which a cellular automaton can force a particular symbol (usually called $0$) to appear frequently in all its spacetime diagrams. The simplest such notion, called simply `nilpotency', is that the cellular automaton $c$ maps every configuration to a uniform configuration $\ldots 000 \ldots$, on which it behaves as the identity, in a uniformly bounded number of steps, that is, $c^n$ is a constant map for some $n$. The notion of `weak nilpotency', where for all $x \in X$, we have $c^n(x) = \ldots 000 \ldots$ for some $n$, is equivalent to nilpotency in all subshifts $X \subset S^{\Z^d}$. We don't know a published reference for this, and give a proof in Proposition~\ref{prop:WeakIsStrong}, slightly strengthening Proposition 2 of \cite{GuGa10}. There are at least two ways to define variants of nilpotency using measure theory. A notion called `ergodicity' is discussed in \cite{BuMaMa10}, where it is also shown to be equivalent to nilpotency, while another notion called `unique ergodicity' is shown to be strictly weaker in \cite{To12}. Unique ergodicity refers to the existence of a unique invariant measure for the cellular automaton, and ergodicity refers to unique ergodicity with the additional assumption that every measure converges weakly to the unique invariant measure in the orbit of the cellular automaton.

A particularly nice variant of nilpotency is the `asymptotic nilpotency' defined and investigated in at least \cite{GuGa08} (where asymptotically nilpotent cellular automata are called `cellular automata with a nilpotent trace') and \cite{GuGa10}. This notion is shown to be equivalent to nilpotency on one-dimensional full shifts in \cite{GuGa08}, and on one-dimensional transitive SFTs in \cite{GuGa10}, but interestingly the proofs are not trivial, unlike for most of the notions listed above which coincide with nilpotency. It was left open in \cite{GuGa08} whether asymptotic nilpotency implies nilpotency in all dimensions, that is, on the groups $\Z^d$ for arbitrary $d$, and the proof given in \cite{GuGa08} does not generalize above $d = 1$ as such. The problem is restated as open in \cite{GuGa10}. In this article, we prove that asymptotic nilpotency indeed implies nilpotency in all dimensions in Theorem~\ref{thm:dDCase} by reducing to the one-dimensional case, and ask whether this holds in all groups.

The study of nilpotency properties of cellular automata belongs to the more general study of asymptotic behavior of cellular automata, where emphasis is usually put on the limit set of a cellular automaton, see \cite{Ku09} for a survey on this topic. More generally, the study of cellular automata is subsumed by the study of multidimensional SFTs, in that a $d$-dimensional cellular automaton can be thought of as a $(d + 1)$-dimensional SFT deterministic in one dimension. See \cite{PaSc10} for a thorough investigation into the `limit behavior' of these systems.

We also slightly extend the result of \cite{GuGa10} that asymptotic nilpotency implies nilpotency on one-dimensional transitive SFTs by removing the assumption of transitivity, and using this observation obtain some more cases in which asymptotic nilpotency implies nilpotency on multidimensional SFTs in Theorem~\ref{thm:FinitesDense}. We also give two simple examples of one-dimensional sofic shifts where asymptotic nilpotency does not imply nilpotency. Finally, we ask whether \emph{any} two-dimensional SFT can admit an asymptotically nilpotent but not nilpotent cellular automaton.

\section{Definitions and Initial Observations}

Let $S$ be a finite set called the \emph{alphabet}, whose elements we call \emph{symbols}. The set $S^{\Z^d}$ with the product topology induced by the discrete topology of $S$ is called the \emph{$d$-dimensional full shift}. We call the elements $x \in S^{\Z^d}$ \emph{configurations}, and for $s \in S$, we write $s^{\Z^d}$ for the \emph{all-$s$ configuration} $x$ such that $x_v = s$ for all $v \in \Z^d$. We denote by $\sigma^v : S^{\Z^d} \to S^{\Z^d}$ the \emph{shift action} defined by $\sigma^v(x)_u = x_{u + v}$. The topologically closed sets which are invariant under $\sigma^v$ for all $v \in \Z^d$ are called \emph{subshifts}. Alternatively, they can be defined by a set of forbidden patterns \cite{LiMa95}, and when this set can be taken to be finite, we say $X$ is a subshift of finite type, or an \emph{SFT}. For a subshift $X \subset S^{\Z^d}$, a continuous function $c : X \to X$ is called a \emph{cellular automaton} if it commutes with $\sigma^v$ for all $v \in \Z^d$. Of course, the shift actions $\sigma^v$ are themselves cellular automata. Cellular automata have a combinatorial description as functions induced by a local rule \cite{He69}. That is, if $c$ is a cellular automaton, there is a finite set of vectors $V$ such that $c(x)_u$ only depends on the pattern $x_{u+V}$. The quantity $r = \max_{v \in V} |v|$ is called the \emph{radius} of $c$, where $|\cdot|$ is the norm $|v| = \max_i |v_i|$.

For a one-dimensional subshift $X$, we denote by $\lang(X)$ the set of words that appear in configurations of $X$, called the \emph{language} of $X$. We denote by $\lang^{-1}(L)$ the topological closure of the set of configurations whose subwords are words in $\mbox{fact}(L)$, where $\mbox{fact}(L)$ is the closure of $L$ under taking subwords. The set $\lang^{-1}(L)$ is a subshift for all languages $L$, and when $\lang(X) = L$ for a subshift $X$, we have $\lang^{-1}(L) = X$. We say a one-dimensional subshift $X$ is \emph{sofic} when $\lang(X)$ is a regular language, and in general, a $d$-dimensional subshift is called sofic when it is the projection of an SFT of dimension $d$ by a pointwise symbol-to-symbol map. When $d = 1$, these definitions coincide \cite{LiMa95}.

A one-dimensional subshift $X$ is said to be \emph{transitive} if
\[ u, v \in \lang(X) \implies (\exists w)(uwv \in \lang(X)), \]
and $X$ is said to be \emph{mixing} if
\[ u, v \in \lang(X) \implies (\exists m)(\forall n \geq m)(\exists w)(|w| = n \wedge uwv \in \lang(X)), \]
that is, transitivity implies that any two words that occur in the subshift can be glued together with some word $w$, and mixing implies that the length can be chosen freely provided it is sufficiently large. We will also need the decomposition of a one-dimensional sofic shift into its transitive components in Corollary~\ref{cor:SFTCase}, and refer to \cite{LiMa95} for the definitions and proofs.

Let $c : X \to X$ be a cellular automaton on a subshift $X \subset S^{\Z^d}$. We say $c$ is \emph{$0$-nilpotent} if
\[ (\exists m)(\forall n \geq m)(\forall x \in X)(\forall v \in \Z^d)(c^n(x)_v = 0). \]
The \emph{limit set} of $c$ is the set of configurations $x$ for which there exists an infinitely long preimage chain $(x_i)_{i \in \N}$ such that $x_0 = x$ and $c(x_{i+1}) = x_i$. It is well-known that $c$ is $s$-nilpotent for some $s \in S$ if and only if its limit set is a singleton \cite{CuPaYu89}. We say $c$ is \emph{weakly $0$-nilpotent} if
\[ (\forall x \in X)(\exists m)(\forall n \geq m)(\forall v \in \Z^d)(c^n(x)_v = 0), \]
and we say $c$ is \emph{asymptotically $0$-nilpotent} if
\[ (\forall x \in X)(\forall v \in \Z^d)(\exists m)(\forall n \geq m)(c^n(x)_v = 0). \]
It is clear from these formulas that nilpotency implies weak nilpotency, which in turn implies asymptotic nilpotency. In this article, we prove that the first implication is always an equivalence, and the second is an equivalence at least when $X = S^{\Z^d}$ (see Theorem~\ref{thm:FinitesDense} for the exact cases we are able to prove). A symbol $s$ is said to be \emph{quiescent} for $c$ if $c(s^{\Z^d}) = s^{\Z^d}$. If $c$ is $0$-nilpotent, weakly $0$-nilpotent or asymptotically $0$-nilpotent, then $0$ must be a quiescent state. Usually, the symbol $0$ is clear from the context and is omitted.

A configuration $x$ is said to be \emph{$0$-finite} if it has only finitely many non-$0$ symbols. It is called \emph{$0$-mortal for $c$} if $c^n(x) = 0^{\Z^d}$ for large enough $n$. Note that weak $0$-nilpotency is equivalent to every configuration being $0$-mortal. Given a configuration $x$, its \emph{trace} is the one-way infinite word $(c^n(x)_{\vec{0}})_{n \in \N}$, where $\vec{0} = (0, \ldots, 0) \in \Z^d$, and its \emph{trace support} is the set $\{n \in \N \;|\; c^n(x)_{\vec{0}} \neq 0\}$. Asymptotic $0$-nilpotency is then equivalent to all configurations having a finite trace support. For $x \in S^{\Z^d}$, we write $\supp_0(x) = \{v \in \Z^d \;|\; x_v \neq 0\}$, called the \emph{support} of $x$, so a $0$-finite configuration is just a configuration with finite $0$-support. For $x, y \in S^{\Z^d}$ with $\supp_0(x) \cap \supp_0(y) = \emptyset$, we define
\[ (x +_0 y)_v = \left\{\begin{array}{ll}
x_v, & \mbox{if } x_v \neq 0, \\
y_v, & \mbox{if } y_v \neq 0, \\
0, & \mbox{otherwise.}
\end{array}\right. \]
Again, in all these definitions, both $0$ and $c$ are omitted if they are clear from context.

For a subshift $X \subset S^{\Z^d}$ and $v \in \Z^d$, we denote by $X(v)$ the subshift $\{x \in X \;|\; \sigma^v(x) = x\}$. When $v^j = (0, \ldots, 0, 1, 0, \ldots, 0)$ is a standard basis vector and $1 \leq p \in \N$, we define the map $\phi_{j, p} : X(p v^j) \to (S^p)^{\Z^{d-1}}$ by $\phi_{j, p}(x)_u = x_{u(0)} x_{u(1)} \cdots x_{u(p-1)}$ where $u(i) = (u_1, \ldots, u_{j-1}, i, u_j, \ldots u_{d-1})$. Clearly, $\phi_{j, p}$ is a homeomorphism between $X(p v^j)$ and $Y = \phi_{j, p}(X(p v^j))$, and we obtain a $(d - 1)$-dimensional action $c_{j, p} : Y \to Y$ by defining $c_{j, p}(\phi_{j, p}(x)) = \phi_{j, p}(c(x))$. This is well-defined because $\phi_{j, p}$ is bijective, and it is continuous and shift-commuting because $\phi_{j, p}, \phi_{j, p}^{-1}$ and $c$ are, so $c_{j, p}$ is a cellular automaton (although some small technical care needs to be taken in showing this due to the coordinate shift at $j$). Moreover, it is easy to see that $c_{j, p}$ is nilpotent (asymptotically nilpotent) if and only if $c$ is nilpotent (asymptotically nilpotent) on $X(p v^j)$.

For a vector $v \in \Z^d$ and $k \in \N$, we define $B_k(v) = \{ u \;|\; |u - v| \leq k \}$ where again $|\cdot|$ is the norm $|v| = \max_i |v_i|$, and for a set of vectors $V \subset \Z^d$ and $k \in \N$, we define $B_k(V) = \bigcup_{v \in V} B_k(v)$. For $v \in \Z^d$ and $j, k \in \N$, we define 
\[ \tower(j, k, v) = B_k(\{u \in \Z^d \;|\; \forall i \neq j: u_i = v_i\}) \subset \Z^d. \]
This is a tower of width $k$ around the vector $v$, extending in the directions $v^j$ and $-v^j$. Finally, for a set of vectors $V \subset \Z^d$ and $j, k \in \N$, we analogously define
\[ \tower(j, k, V) = \bigcup_{v \in V} \tower(j, k, v). \]
See Figure~\ref{fig:BallAndTower} for an illustration of these concepts.

\begin{figure}[ht]
\begin{center}
\begin{tikzpicture}[scale = 0.2]

\fill[draw=black,color=black!20] (-1,-7) rectangle (18,24);

\fill[draw=black,color=black] (5,8) rectangle (6,9);
\fill[draw=black,color=black] (6,7) rectangle (7,8);
\fill[draw=black,color=black] (6,8) rectangle (7,9);
\fill[draw=black,color=black] (7,5) rectangle (8,6);
\fill[draw=black,color=black] (7,8) rectangle (8,9);
\fill[draw=black,color=black] (7,9) rectangle (8,10);
\fill[draw=black,color=black] (8,4) rectangle (9,5);
\fill[draw=black,color=black] (8,5) rectangle (9,6);
\fill[draw=black,color=black] (8,8) rectangle (9,9);
\fill[draw=black,color=black] (8,11) rectangle (9,12);
\fill[draw=black,color=black] (9,5) rectangle (10,6);
\fill[draw=black,color=black] (9,6) rectangle (10,7);
\fill[draw=black,color=black] (9,8) rectangle (10,9);
\fill[draw=black,color=black] (9,9) rectangle (10,10);
\fill[draw=black,color=black] (9,10) rectangle (10,11);
\fill[draw=black,color=black] (9,11) rectangle (10,12);
\fill[draw=black,color=black] (10,8) rectangle (11,9);
\fill[draw=black,color=black] (11,8) rectangle (12,9);
\fill[draw=black,color=black] (11,9) rectangle (12,10);

\fill[draw=black,color=black!35] (7,3) rectangle (8,4);
\fill[draw=black,color=black!35] (6,9) rectangle (7,10);
\fill[draw=black,color=black!35] (11,11) rectangle (12,12);
\fill[draw=black,color=black!35] (7,12) rectangle (8,13);
\fill[draw=black,color=black!35] (12,12) rectangle (13,13);
\fill[draw=black,color=black!35] (3,7) rectangle (4,8);
\fill[draw=black,color=black!35] (2,5) rectangle (3,6);
\fill[draw=black,color=black!35] (5,5) rectangle (6,6);
\fill[draw=black,color=black!35] (11,5) rectangle (12,6);
\fill[draw=black,color=black!35] (10,7) rectangle (11,8);
\fill[draw=black,color=black!35] (7,6) rectangle (8,7);
\fill[draw=black,color=black!35] (6,10) rectangle (7,11);
\fill[draw=black,color=black!35] (12,6) rectangle (13,7);
\fill[draw=black,color=black!35] (13,7) rectangle (14,8);
\fill[draw=black,color=black!35] (4,10) rectangle (5,11);
\fill[draw=black,color=black!35] (2,6) rectangle (3,7);
\fill[draw=black,color=black!35] (9,14) rectangle (10,15);
\fill[draw=black,color=black!35] (8,2) rectangle (9,3);
\fill[draw=black,color=black!35] (5,11) rectangle (6,12);
\fill[draw=black,color=black!35] (4,5) rectangle (5,6);
\fill[draw=black,color=black!35] (10,13) rectangle (11,14);
\fill[draw=black,color=black!35] (9,3) rectangle (10,4);
\fill[draw=black,color=black!35] (12,11) rectangle (13,12);
\fill[draw=black,color=black!35] (13,10) rectangle (14,11);
\fill[draw=black,color=black!35] (8,12) rectangle (9,13);
\fill[draw=black,color=black!35] (2,11) rectangle (3,12);
\fill[draw=black,color=black!35] (5,14) rectangle (6,15);
\fill[draw=black,color=black!35] (10,14) rectangle (11,15);
\fill[draw=black,color=black!35] (6,13) rectangle (7,14);
\fill[draw=black,color=black!35] (14,8) rectangle (15,9);
\fill[draw=black,color=black!35] (12,8) rectangle (13,9);
\fill[draw=black,color=black!35] (3,11) rectangle (4,12);
\fill[draw=black,color=black!35] (8,9) rectangle (9,10);
\fill[draw=black,color=black!35] (4,12) rectangle (5,13);
\fill[draw=black,color=black!35] (9,4) rectangle (10,5);
\fill[draw=black,color=black!35] (5,1) rectangle (6,2);
\fill[draw=black,color=black!35] (10,3) rectangle (11,4);
\fill[draw=black,color=black!35] (7,2) rectangle (8,3);
\fill[draw=black,color=black!35] (6,14) rectangle (7,15);
\fill[draw=black,color=black!35] (12,2) rectangle (13,3);
\fill[draw=black,color=black!35] (11,10) rectangle (12,11);
\fill[draw=black,color=black!35] (14,5) rectangle (15,6);
\fill[draw=black,color=black!35] (12,13) rectangle (13,14);
\fill[draw=black,color=black!35] (3,6) rectangle (4,7);
\fill[draw=black,color=black!35] (8,6) rectangle (9,7);
\fill[draw=black,color=black!35] (10,9) rectangle (11,10);
\fill[draw=black,color=black!35] (9,7) rectangle (10,8);
\fill[draw=black,color=black!35] (6,4) rectangle (7,5);
\fill[draw=black,color=black!35] (5,4) rectangle (6,5);
\fill[draw=black,color=black!35] (11,4) rectangle (12,5);
\fill[draw=black,color=black!35] (10,4) rectangle (11,5);
\fill[draw=black,color=black!35] (7,1) rectangle (8,2);
\fill[draw=black,color=black!35] (6,11) rectangle (7,12);
\fill[draw=black,color=black!35] (12,7) rectangle (13,8);
\fill[draw=black,color=black!35] (14,6) rectangle (15,7);
\fill[draw=black,color=black!35] (13,6) rectangle (14,7);
\fill[draw=black,color=black!35] (4,11) rectangle (5,12);
\fill[draw=black,color=black!35] (3,5) rectangle (4,6);
\fill[draw=black,color=black!35] (2,7) rectangle (3,8);
\fill[draw=black,color=black!35] (9,13) rectangle (10,14);
\fill[draw=black,color=black!35] (8,3) rectangle (9,4);
\fill[draw=black,color=black!35] (5,10) rectangle (6,11);
\fill[draw=black,color=black!35] (4,6) rectangle (5,7);
\fill[draw=black,color=black!35] (10,10) rectangle (11,11);
\fill[draw=black,color=black!35] (9,2) rectangle (10,3);
\fill[draw=black,color=black!35] (6,1) rectangle (7,2);
\fill[draw=black,color=black!35] (5,7) rectangle (6,8);
\fill[draw=black,color=black!35] (11,3) rectangle (12,4);
\fill[draw=black,color=black!35] (7,4) rectangle (8,5);
\fill[draw=black,color=black!35] (14,12) rectangle (15,13);
\fill[draw=black,color=black!35] (12,4) rectangle (13,5);
\fill[draw=black,color=black!35] (13,9) rectangle (14,10);
\fill[draw=black,color=black!35] (8,13) rectangle (9,14);
\fill[draw=black,color=black!35] (4,8) rectangle (5,9);
\fill[draw=black,color=black!35] (2,8) rectangle (3,9);
\fill[draw=black,color=black!35] (5,13) rectangle (6,14);
\fill[draw=black,color=black!35] (6,2) rectangle (7,3);
\fill[draw=black,color=black!35] (11,14) rectangle (12,15);
\fill[draw=black,color=black!35] (7,11) rectangle (8,12);
\fill[draw=black,color=black!35] (14,9) rectangle (15,10);
\fill[draw=black,color=black!35] (12,9) rectangle (13,10);
\fill[draw=black,color=black!35] (13,12) rectangle (14,13);
\fill[draw=black,color=black!35] (3,10) rectangle (4,11);
\fill[draw=black,color=black!35] (8,10) rectangle (9,11);
\fill[draw=black,color=black!35] (12,3) rectangle (13,4);
\fill[draw=black,color=black!35] (11,13) rectangle (12,14);
\fill[draw=black,color=black!35] (7,14) rectangle (8,15);
\fill[draw=black,color=black!35] (14,10) rectangle (15,11);
\fill[draw=black,color=black!35] (12,14) rectangle (13,15);
\fill[draw=black,color=black!35] (3,9) rectangle (4,10);
\fill[draw=black,color=black!35] (8,7) rectangle (9,8);
\fill[draw=black,color=black!35] (4,2) rectangle (5,3);
\fill[draw=black,color=black!35] (6,5) rectangle (7,6);
\fill[draw=black,color=black!35] (5,3) rectangle (6,4);
\fill[draw=black,color=black!35] (11,7) rectangle (12,8);
\fill[draw=black,color=black!35] (10,5) rectangle (11,6);
\fill[draw=black,color=black!35] (7,13) rectangle (8,14);
\fill[draw=black,color=black!35] (14,7) rectangle (15,8);
\fill[draw=black,color=black!35] (13,5) rectangle (14,6);
\fill[draw=black,color=black!35] (3,4) rectangle (4,5);
\fill[draw=black,color=black!35] (9,12) rectangle (10,13);
\fill[draw=black,color=black!35] (5,9) rectangle (6,10);
\fill[draw=black,color=black!35] (4,7) rectangle (5,8);
\fill[draw=black,color=black!35] (10,11) rectangle (11,12);
\fill[draw=black,color=black!35] (9,1) rectangle (10,2);
\fill[draw=black,color=black!35] (6,6) rectangle (7,7);
\fill[draw=black,color=black!35] (5,6) rectangle (6,7);
\fill[draw=black,color=black!35] (11,2) rectangle (12,3);
\fill[draw=black,color=black!35] (10,6) rectangle (11,7);
\fill[draw=black,color=black!35] (7,7) rectangle (8,8);
\fill[draw=black,color=black!35] (12,5) rectangle (13,6);
\fill[draw=black,color=black!35] (13,8) rectangle (14,9);
\fill[draw=black,color=black!35] (8,14) rectangle (9,15);
\fill[draw=black,color=black!35] (4,9) rectangle (5,10);
\fill[draw=black,color=black!35] (2,9) rectangle (3,10);
\fill[draw=black,color=black!35] (8,1) rectangle (9,2);
\fill[draw=black,color=black!35] (5,12) rectangle (6,13);
\fill[draw=black,color=black!35] (4,4) rectangle (5,5);
\fill[draw=black,color=black!35] (10,12) rectangle (11,13);
\fill[draw=black,color=black!35] (6,3) rectangle (7,4);
\fill[draw=black,color=black!35] (11,1) rectangle (12,2);
\fill[draw=black,color=black!35] (7,10) rectangle (8,11);
\fill[draw=black,color=black!35] (12,10) rectangle (13,11);
\fill[draw=black,color=black!35] (13,11) rectangle (14,12);
\fill[draw=black,color=black!35] (2,10) rectangle (3,11);
\fill[draw=black,color=black!35] (10,1) rectangle (11,2);
\fill[draw=black,color=black!35] (6,12) rectangle (7,13);
\fill[draw=black,color=black!35] (11,12) rectangle (12,13);
\fill[draw=black,color=black!35] (14,11) rectangle (15,12);
\fill[draw=black,color=black!35] (3,8) rectangle (4,9);
\fill[draw=black,color=black!35] (4,3) rectangle (5,4);
\fill[draw=black,color=black!35] (5,2) rectangle (6,3);
\fill[draw=black,color=black!35] (11,6) rectangle (12,7);
\fill[draw=black,color=black!35] (10,2) rectangle (11,3);

\draw[black!50] (-7,-7) grid (24,24);

\node[circle,right] (p) at (26, 18.5) {$V = \{\mbox{a finite set of black cells}\}$};
\node[circle,right] (b) at (26, 13.5) {$B_3(V)$};
\node[circle,right] (t) at (26, 8.5)  {$\tower(2, 6, V)$};
\draw[thick] (11.5,8.5) -- (p.west);
\draw[thick] (14.5,7.5) -- (b.west);
\draw[thick] (17.5,6.5) -- (t.west);
\end{tikzpicture}
\end{center}

\caption{A ball of radius $3$ and a vertical tower of width $6$ around a finite set of vectors. The vertical direction is the second axis.}
\label{fig:BallAndTower}
\end{figure}
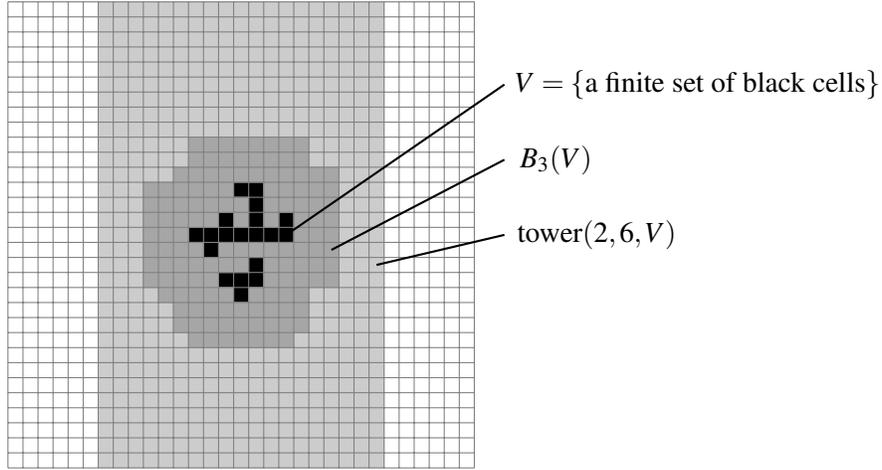

\section{The Results}

First, we show that weak nilpotency implies (and is thus equivalent to) nilpotency in all subshifts $X \subset S^{\Z^d}$. Note that Proposition 2 of \cite{GuGa10} already proves the claim for transitive subshifts on all groups, but it is not clear how to remove the requirement of transitivity. The easiest proof in the transitive case is probably obtained as follows: transitivity guarantees the existence of a configuration $x$ that contains every finite pattern, and if $c^n(x) = 0^{\Z^d}$ then every pattern maps to $0$ in $n$ steps, so if $c$ is weakly nilpotent, it is nilpotent. The general case is not much harder.

\begin{proposition}
\label{prop:WeakIsStrong}
Let $X \subset S^{\Z^d}$ be a subshift. Then a cellular automaton $c$ on $X$ is nilpotent if and only if it is weakly nilpotent.
\end{proposition}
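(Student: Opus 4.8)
The forward implication is immediate: if $c^m$ is the constant map to $0^{\Z^d}$, then in particular $c^m(x) = 0^{\Z^d}$ for every $x$, so $c$ is weakly nilpotent. For the converse I plan to argue by contraposition, assuming $c$ is not nilpotent and constructing a single configuration that is not $0$-mortal; this contradicts weak nilpotency, which as noted above is exactly the statement that every configuration is $0$-mortal. Non-nilpotency gives, for arbitrarily large $n$, a configuration $x^{(n)} \in X$ with $c^n(x^{(n)}) \neq 0^{\Z^d}$, and since $0$ is quiescent such an $x^{(n)}$ in fact satisfies $c^t(x^{(n)}) \neq 0^{\Z^d}$ for all $t \le n$ (a zero at an earlier time would persist).

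The first key step uses the finite radius $r$ of $c$ to trace a defect backwards through the spacetime diagram. Given a cell $a_n$ with $c^n(x^{(n)})_{a_n} \neq 0$, quiescence of $0$ forces some $a_{n-1} \in B_r(a_n)$ with $c^{n-1}(x^{(n)})_{a_{n-1}} \neq 0$, since otherwise the pattern of $c^{n-1}(x^{(n)})$ on $B_r(a_n)$ is all-$0$ and maps to $0$ at $a_n$. Iterating yields positions $a_n, a_{n-1}, \dots, a_0$ with $|a_t - a_{t-1}| \le r$ and $c^t(x^{(n)})_{a_t} \neq 0$ for every $t \le n$. I then recenter at the base of this chain, setting $y^{(n)} = \sigma^{a_0}(x^{(n)}) \in X$; writing $b_t = a_t - a_0$ this gives $b_0 = \vec 0$, $|b_t| \le rt$, and $c^t(y^{(n)})_{b_t} \neq 0$ for all $t \le n$. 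The point of centering at $a_0$ rather than at $a_n$ is that now, for each fixed time $t$, the witnessing cell $b_t$ is confined to the fixed finite ball $B_{rt}(\vec 0)$, independently of $n$.

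The second key step is a diagonal compactness argument exploiting the finite alphabet. By compactness of $X$ I pass to a subsequence along which $y^{(n)} \to y^*$; since each $b_t$ ranges over the finite set $B_{rt}(\vec 0) \cap \Z^d$, a further diagonalization makes $b_t$ eventually equal to some fixed $\beta_t$ for every $t$ simultaneously. For each fixed $t$, continuity of $c^t$ gives $c^t(y^*)_{\beta_t} = \lim_n c^t(y^{(n)})_{\beta_t}$, and since $c^t(y^{(n)})_{\beta_t} \neq 0$ for all large $n$ in the subsequence while $S$ is finite (so the limit is attained), we obtain $c^t(y^*)_{\beta_t} \neq 0$. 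Thus $c^t(y^*) \neq 0^{\Z^d}$ for every $t \in \N$, so $y^*$ is not $0$-mortal, which is the desired contradiction.

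I expect the only real obstacle to be the ``escaping defect'' phenomenon: a naive limit of the $x^{(n)}$ can easily be $0$-mortal, even equal to $0^{\Z^d}$, because the surviving nonzero cell drifts off to infinity. This is precisely why plain topological compactness is insufficient here and why the transitive case treated in \cite{GuGa10} was easier. Centering the backward defect chain at its time-$0$ cell is exactly what pins the defect inside the fixed balls $B_{rt}(\vec 0)$ and lets the finiteness of $S$ freeze a nonzero symbol at every time, so that no transitivity assumption on $X$ is needed.
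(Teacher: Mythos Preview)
Your argument is correct and follows essentially the same route as the paper: trace a nonzero cell backward through time using quiescence of $0$ to obtain a chain of defect positions moving by at most $r$ per step, recenter so the defect stays in a bounded region, and extract a limit by compactness to produce an immortal point. The only cosmetic difference is that the paper starts from a point in the limit set (hence an infinite preimage chain) and derives the contradiction at the single time $n$ supplied by weak nilpotency, whereas you work directly with the finite-time witnesses $x^{(n)}$ and diagonalize over all times to pin down $\beta_t$ for every $t$; both executions rest on the same idea and neither is materially simpler than the other.
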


\begin{proof}
Suppose on the contrary that the CA $c : X \to X$ is weakly nilpotent but not nilpotent for some subshift $X \subset S^{\Z^d}$. Then there exists a configuration $x$ in the limit set of $c$ with $x_{\vec{0}} \neq 0$, for the symbol $0$ such that all configurations reach the all-$0$ configuration in finitely many steps. Since $x$ is in the limit set, it has an infinite chain $(x^i)_{i \in \N}$ of preimages. If $r$ is the radius of $c$, then since $0$ is a quiescent state, there must exist a sequence of vectors $(v^i)_{i \in \N}$ such that $|v^i - v^{i+1}| \leq r$ and $\sigma^{v^i}(x^i)_{\vec{0}} \neq 0$ for all $i \in \N$.

Let $y$ be a limit of a converging subsequence of $(\sigma^{v^i}(x^i))_{i \in \N}$ in the product topology of $S^{\Z^d}$. Since $c$ is weakly nilpotent, there exists $n \in \N$ such that $c^n(y) = 0^{\Z^d}$. Let $i \in \N$ be such that
\begin{equation}
\label{eq:XiAndY}
y_{B_{2rn}(\vec{0})} = \sigma^{v^{i+n}}(x^{i+n})_{B_{2rn}(\vec{0})}.
\end{equation}
By definition of the $x^i$ and $v^i$, we have that $c^n(x^{i+n}) = x^i$ and thus
\[ c^n(y)_{B_{rn}(\vec{0})} = \sigma^{v^{i+n}}(x^i)_{B_{rn}(\vec{0})} \]
contains a nonzero symbol, a contradiction, since we assumed $c^n(y) = 0^{\Z^d}$.
\end{proof}

The proof naturally generalizes for cellular automata on subshifts of $S^G$ where $G$ is any group generated by a finite set $A$, by using the distance $d(g, h) = \min\{n \;|\; \exists g_1, \ldots, g_n \in A: g^{-1}h = g_1 \ldots, g_n\}$. What makes Proposition~\ref{prop:WeakIsStrong} particularly interesting is that these notions are \emph{not} equivalent for discrete dynamical systems (compact metrizable spaces paired with a continuous function) in general, and we give a simple counterexample.

\begin{example}
Consider the (compact, metrizable) Alexandroff one-point compactification $\alpha \N$ of the natural numbers with the point $\infty$ at infinity coupled with the action
\[ \phi(n) = \left\{\begin{array}{ll} n-1, & \mbox{if } 0 < n < \infty, \\ \infty, & \mbox{otherwise.} \end{array}\right. \]
It is easy to check that this is a continuous function, so that $(\alpha \N, \phi)$ is a dynamical system. In this system, when $n \neq \infty$ we have that $\phi^{n+1}(n) = \infty$ but $\phi^j(n) \neq \infty$ for $j \leq n$, and $\phi(\infty) = \infty$. This means that $(\alpha \N, \phi)$ is weakly nilpotent but not nilpotent.
\end{example}

Now, we proceed to our main results. We start by repeating Theorem 3 of \cite{GuGa08} and then show how to apply it to prove Conjecture 1 of \cite{GuGa10} in Theorem~\ref{thm:dDCase}.

\begin{theorem}[Theorem 3 of \cite{GuGa08}]
\label{thm:1DCase}
Asymptotic nilpotency implies nilpotency in cellular automata on $S^\Z$.
\end{theorem}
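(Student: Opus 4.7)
The plan is to reduce to Proposition~\ref{prop:WeakIsStrong}: it suffices to show that asymptotic $0$-nilpotency on $S^\Z$ implies weak $0$-nilpotency. I would argue by contradiction. Suppose $c$ is asymptotically $0$-nilpotent and let $x \in S^\Z$ be a configuration whose forward orbit never reaches $0^\Z$; since $0$ is quiescent, this forces $c^n(x) \neq 0^\Z$ for every $n$, so the supports $T_n = \supp_0(c^n(x))$ are all nonempty.

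Next I would analyze the extremal nonzero positions $R_n = \sup T_n$ and $L_n = \inf T_n$, valued in $\Z \cup \{\pm\infty\}$. Since $c$ has some finite radius $r$ and $0$ is quiescent, whenever $R_n$ is finite we have $R_{n+1} \leq R_n + r$, and symmetrically $L_{n+1} \geq L_n - r$. If the orbit $\{c^n(x)\}_{n \in \N}$ were contained in configurations with support inside a common bounded interval $[-M,M]$, then applying asymptotic nilpotency cell by cell to the finitely many cells in $[-M,M]$ would give $T_n = \emptyset$ for large $n$, a contradiction. So at least one of $R_n$, $L_n$ must escape to infinity along some subsequence; without loss of generality $R_n \to +\infty$.

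The heart of the argument is to extract a ``right-propagating'' limit configuration from this escape and use it to build an explicit violation of asymptotic nilpotency. Consider the shifted configurations $y_n = \sigma^{R_n}(c^n(x))$, which satisfy $(y_n)_0 \neq 0$ and $(y_n)_v = 0$ for every $v > 0$; by compactness of $S^\Z$, a subsequence converges to some $z$ with $z_0 \neq 0$ and $z_v = 0$ for $v > 0$. I would then try to show that widely spaced shifts of $z$, or of appropriate finite windows of $z$, superposed into a single configuration of $S^\Z$, produce an orbit along which the cell at the origin is visited by a nonzero ``signal'' at infinitely many times. I expect the main obstacle to be quantifying exactly how $z$ propagates: since the support of $z$ may stretch to $-\infty$, one must rule out the possibility that its left-going tail annihilates the leading nonzero cell, and one must control interactions between the overlapping left tails of translated copies. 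This obstruction is genuinely one-dimensional in character, which is presumably why Theorem~\ref{thm:dDCase} requires a separate reduction back to $d = 1$ rather than being provable by the same method in arbitrary dimension.
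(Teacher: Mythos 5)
This statement is imported verbatim as Theorem~3 of \cite{GuGa08}; the paper gives no proof of it, so your attempt can only be judged on its own merits, and it has a genuine gap exactly where the theorem's real content lies. Your preliminary reductions are reasonable (reducing to weak nilpotency via Proposition~\ref{prop:WeakIsStrong} is legitimate, and the inequalities $R_{n+1}\leq R_n+r$, $L_{n+1}\geq L_n-r$ are correct), though even there you should first reduce to \emph{finite} configurations --- e.g.\ via density of finite points and Lemma~\ref{lem:FiniteMortality} --- because for a general non-mortal $x$ the quantities $R_n$, $L_n$ can be $+\infty$ or $-\infty$ for every $n$, in which case $\sigma^{R_n}$ is undefined and your trichotomy collapses. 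That issue is fixable.

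What is not fixable as written is the ``heart of the argument,'' which you yourself flag as unresolved. The limit configuration $z$ with $z_0\neq 0$ and $z_v=0$ for $v>0$ carries no contradiction by itself: asymptotic nilpotency applied to $z$ only says its cell $0$ eventually dies, and $z$ may have support equal to all of $(-\infty,0]$, so ``widely spaced shifts of $z$ superposed'' is not even a well-defined element of $S^{\Z}$ (the $+_0$ operation needs disjoint supports), while replacing $z$ by a finite window destroys any guarantee that the window's evolution tracks that of $z$, let alone that its nonzero cells travel rightward far enough to revisit the origin. The essential difficulty --- showing that nonzero values must pass through a \emph{fixed} cell infinitely often, despite possible annihilation by left tails and interaction between copies --- is precisely the nontrivial combinatorial bookkeeping that constitutes the Guillon--Richard proof, and the present paper explicitly remarks that this one-dimensional result is not trivial. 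As it stands, your proposal is a plausible opening move followed by a statement of the problem, not a proof.
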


In fact, the following results were essentially already proved in \cite{GuGa08} on full shifts in all dimensions. We will briefly outline the proofs for completeness.

\begin{lemma}
\label{lem:UniformVisits}
If $X \subset S^{\Z^d} \to S^{\Z^d}$ is a subshift and the cellular automaton $c : X \to X$ is asymptotically nilpotent, then
\[ (\forall k)(\exists n)(\forall x \in X)(\exists 0 \leq j \leq n)(\forall v \in B_k(\vec{0}))(c^j(x)_v = 0). \]
\end{lemma}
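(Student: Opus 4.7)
The plan is a standard compactness argument turning the pointwise eventual vanishing supplied by asymptotic nilpotency into a uniform bound. Fix $k$, and for each $n \in \N$ define
\[ V_n = \{x \in X \;|\; \exists\, 0 \leq j \leq n: \forall v \in B_k(\vec{0}),\, c^j(x)_v = 0\}. \]
The conclusion of the lemma is exactly that $V_n = X$ for some $n$. Since the $V_n$ are nested in $n$ and $X$ is compact, it suffices to show that every $V_n$ is open and that $\bigcup_n V_n = X$.

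Openness is immediate from locality: if $r$ is the radius of $c$, then $c^j(x)_v$ depends only on $x$ restricted to the finite set $B_{jr}(v)$. So each basic condition $c^j(x)_v = 0$ cuts out a clopen cylinder, and $V_n$ is built from finitely many such cylinders by intersection over $v \in B_k(\vec{0})$ and union over $0 \leq j \leq n$.

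For the covering property, fix $x \in X$. Asymptotic nilpotency supplies, for each of the finitely many cells $v \in B_k(\vec{0})$, a threshold $m_{x,v}$ with $c^j(x)_v = 0$ for all $j \geq m_{x,v}$. Setting $m_x = \max_{v \in B_k(\vec{0})} m_{x,v}$, the single time $j = m_x$ witnesses $x \in V_{m_x}$, so $X = \bigcup_n V_n$. The only real content of the argument is this quantifier exchange — turning the per-cell eventual-zero statement of asymptotic nilpotency into a statement that a \emph{single} time annihilates the entire finite ball $B_k(\vec{0})$ — which is painless precisely because $B_k(\vec{0})$ is finite. Compactness of $X$ then promotes pointwise membership in some $V_{m_x}$ to uniform membership of all of $X$ in a single $V_n$, which is the desired uniform bound. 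I do not expect any real obstacle; the mild care needed is simply to write the right $V_n$ so that its openness and the asymptotic-nilpotency hypothesis line up.
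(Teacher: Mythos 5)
Your proof is correct and is essentially the same compactness argument as the paper's, just phrased via an open cover by the clopen sets $V_n$ rather than by extracting a limit point from a sequence of counterexample configurations. The two formulations are interchangeable here, and your explicit verification that each $V_n$ is clopen supplies the continuity step the paper's limit-point argument uses implicitly.
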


\begin{proof}
We need to prove that for all $k$, there is a uniform bound $n$ for the first time the cells $B_k(\vec{0})$ are simultaneously zero. But if this were not the case for some $k$, we would find for all $n \in \N$ a configuration $x_n$ where $c^j(x_n)_{B_k(\vec{0})}$ contains a nonzero value for all $0 \leq j \leq n$. By taking a limit point of the sequence $x_n$, we would then obtain a configuration $x$ such that $c^j(x)_{B_k(\vec{0})}$ contains a nonzero value for all $j \in \N$, a contradiction.
\end{proof}

\begin{lemma}
\label{lem:FiniteMortality}
If $X \subset S^{\Z^d}$ is an SFT with dense finite points, the cellular automaton $c : X \to X$ is asymptotically nilpotent, and all finite patterns are mortal for $c$, then $c$ is nilpotent.
\end{lemma}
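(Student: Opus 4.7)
The plan is to apply Proposition~\ref{prop:WeakIsStrong} and thereby reduce to proving weak nilpotency: every $x \in X$ is $0$-mortal. I will argue this by approximating $x$ with finite configurations and extracting a uniform bound on their mortality times. For each $n$, let $y_n \in X$ be a finite configuration agreeing with $x$ on $B_n(\vec 0)$; by density of finite points together with the SFT property, one may arrange $\supp_0(y_n) \subseteq B_{n + c_0}(\vec 0)$ for a fixed buffer $c_0$ depending only on the forbidden patterns defining $X$. Each $y_n$ is mortal by hypothesis, say $c^{t_n}(y_n) = 0^{\Z^d}$, and the goal becomes to bound $t_n$, so that continuity of $c^T$ gives $c^T(x) = \lim_n c^T(y_n) = 0^{\Z^d}$ for some fixed $T$.

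To bound $t_n$, I invoke Lemma~\ref{lem:UniformVisits}: for every $k$ there is $N(k)$ such that every configuration in $X$ has $B_k(\vec 0)$ cleared at some time $\leq N(k)$. Since $0$ is quiescent and $c$ has radius $r$, the support of $c^t(y_n)$ is contained in $B_{n + c_0 + rt}(\vec 0)$. If $k$ can be chosen so that $k \geq n + c_0 + r N(k)$, then applying Lemma~\ref{lem:UniformVisits} to $y_n$ produces $j_n \leq N(k)$ with $c^{j_n}(y_n)$ vanishing on $B_k(\vec 0)$, and since also $\supp_0(c^{j_n}(y_n)) \subseteq B_k(\vec 0)$, this forces $c^{j_n}(y_n) = 0^{\Z^d}$, giving $t_n \leq N(k)$ uniformly in $n$.

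The main obstacle is the solvability of $k - r N(k) \geq n + c_0$: in the non-nilpotent case the backward light cone construction in the proof of Proposition~\ref{prop:WeakIsStrong} forces $N(k) > k/r$, which a priori blocks the above choice of $k$ for any fixed $n$. This is where the third hypothesis---that all finite configurations of $X$ are mortal---must enter essentially. I would extract, from the penultimate states $c^{t_n - 1}(y_n)$ (shifted to have a nonzero cell at $\vec 0$), a limit $\tilde u \in X$ with $\tilde u \neq 0^{\Z^d}$, $c(\tilde u) = 0^{\Z^d}$, and $\tilde u$ in the limit set of $c$ (since $\tilde u$ inherits arbitrarily long preimage chains from the $y_n$). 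Playing the infinite preimage chain of $\tilde u$ inside the limit set, together with the asymptotic nilpotency along it, against the finite-mortality hypothesis should then yield a finite configuration whose orbit fails to reach $0^{\Z^d}$, contradicting the hypothesis. This final extraction-and-contradiction step is the subtlest part of the argument.
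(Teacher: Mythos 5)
Your first strategy fails for the reason you yourself identify: the inequality $k \geq n + c_0 + rN(k)$ need not be solvable, so Lemma~\ref{lem:UniformVisits} alone cannot bound the mortality times $t_n$ uniformly. (There is also an earlier unjustified step: density of finite points gives \emph{some} finite configuration agreeing with $x$ on $B_n(\vec 0)$, but not one whose support is confined to $B_{n+c_0}(\vec 0)$; truncating a configuration of a general SFT to a prescribed ball can introduce forbidden patterns.) The real gap is in the fallback. Extracting the limit $\tilde u$ of shifted penultimate states is legitimate --- $\tilde u \neq 0^{\Z^d}$, $c(\tilde u) = 0^{\Z^d}$, and $\tilde u$ lies in the limit set --- but at that point you have only established that the limit set is not a singleton, i.e.\ that $c$ is not nilpotent, which is precisely the situation you still have to rule out. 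The sentence ``playing the infinite preimage chain \dots against the finite-mortality hypothesis should then yield a finite configuration whose orbit fails to reach $0^{\Z^d}$'' is not an argument: the members of the preimage chain of $\tilde u$ need not be finite, and nothing you have set up produces an immortal \emph{finite} configuration from them. Indeed no such configuration can exist under the hypotheses, so that cannot be the contradiction; the contradiction has to be with asymptotic nilpotency itself, and your proposal never uses asymptotic nilpotency beyond its weak consequence Lemma~\ref{lem:UniformVisits}, which (as the paper's remark about non-nilpotent CA with mortal finite points suggests) is not enough.

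The missing idea is a superposition construction. Assuming $c$ is not nilpotent, for every $k$ there is a configuration whose trace support contains a time larger than $k$; by density of finite points it may be taken finite, hence mortal by hypothesis, and by Lemma~\ref{lem:UniformVisits} (replacing $z$ by a suitable image $c^j(z)$, which remains finite since $0$ is quiescent) it may be taken to vanish on $B_k(\vec 0)$ while still having large trace support. Because $X$ is an SFT and each summand is finite, the sum $x = \sum_i z_{k_i}$ with $k_i$ growing fast enough lies in $X$, and the mortality of each $z_{k_i}$ guarantees it dies before the evolution of $z_{k_{i+1}}$ reaches it, so $c^t(x) = \sum_i c^t(z_{k_i})$ for all $t$. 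The trace support of $x$ is then the infinite union of those of the summands, contradicting asymptotic nilpotency. All three hypotheses (SFT, dense finite points, mortality of finite configurations) enter exactly in making this one configuration well defined and its summands non-interacting; that is the step your proposal leaves open.
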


\begin{proof}
If $c$ is not nilpotent, then for any $k$, there exists a configuration $z_k$ where the trace support contains a number larger than $k$. We may assume the $z_k$ are finite by the assumption that finite points are dense, and they are then automatically mortal. By Lemma~\ref{lem:UniformVisits}, the $z_k$ can further be taken to have no nonzero values in the coordinates $B_k(\vec{0})$. Any amount of such finite configurations can then be placed disjointly around each other: The sum $x = \sum_{i \in \N} z_{k_i}$ is well defined and in $X$ if $k_i$ grows rapidly enough, since $X$ is an SFT. Further, with a sequence that grows rapidly enough, the evolutions of the summands $z_{k_i}$ of $x$ are disjoint in the sense that no two $z_{k_i}$ interact: we use the mortality of $z_{k_i}$ to assure it dies long before $z_{k_{i+1}}$ reaches it. But then the trace support of $x$ is the infinite union of the trace supports of the summands $z_{k_i}$, so $c$ is not asymptotically nilpotent.
\end{proof}

In particular, the previous lemma holds for the full shift. The generalization to subshifts with dense finite points is needed for the analogous generalization of Theorem~\ref{thm:dDCase} to Theorem~\ref{thm:FinitesDense}.

We note that simply having dense finite points and mortality of finite patterns is not enough in Lemma~\ref{lem:FiniteMortality}, even in the one-dimensional case. Simple examples of non-nilpotent CA on the full shift for which finite points are mortal are given in at least \cite{GaKuLe78,Ku03,KaGl12}.

\begin{theorem}
\label{thm:dDCase}
Asymptotic nilpotency implies nilpotency on $S^{\Z^d}$.
\end{theorem}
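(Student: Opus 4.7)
The plan is to induct on $d$, with Theorem~\ref{thm:1DCase} supplying the base case.

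For the inductive step, assume the theorem in dimension $d-1$. Let $c$ be asymptotically nilpotent on $S^{\Z^d}$. For each $p \geq 1$, the induced $(d-1)$-dimensional cellular automaton $c_{d,p}$ on $(S^p)^{\Z^{d-1}}$ inherits asymptotic nilpotency from $c$, so the inductive hypothesis yields its nilpotency; equivalently, there is $N_p \in \N$ such that $c^{N_p}(\tilde{x}) = 0^{\Z^d}$ for every $\tilde{x} \in X(pv^d)$.

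By Lemma~\ref{lem:FiniteMortality} applied to the full shift $S^{\Z^d}$ (which has dense finite points), it suffices to show that every finite configuration is mortal. Fix a finite $x$ with $\supp_0(x) \subset B_R(\vec{0})$. For a period $p > 2R$, let $\tilde{x}_p \in X(pv^d)$ be the periodization of $x$ in direction $v^d$; then $c^{N_p}(\tilde{x}_p) = 0^{\Z^d}$. By locality, if $p > 2(R + N_p r)$ then the periodic copies of $x$ within $\tilde{x}_p$ cannot influence each other during the first $N_p$ steps, so $c^{N_p}(x)_v = c^{N_p}(\tilde{x}_p)_v = 0$ for every $v \in \supp_0(c^{N_p}(x)) \subset B_{R + N_p r}(\vec{0})$, and $x$ is mortal.

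The main obstacle is verifying the existence of a period $p$ with $p > 2(R + N_p r)$, because $N_p$ itself depends on $p$. I would handle this by contradiction: assuming $N_p \geq (p - 2R)/(2r)$ for all large $p$, so $N_p \to \infty$, pick for each $p$ a witness $\tilde{x}_p \in X(pv^d)$ with $c^{N_p - 1}(\tilde{x}_p) \neq 0^{\Z^d}$, and build a backward spacetime path of nonzero cells of length $N_p$ as in the proof of Proposition~\ref{prop:WeakIsStrong}. Shifting the $\tilde{x}_p$ to anchor these paths at the origin and passing to a subsequential limit $y \in S^{\Z^d}$, one may exploit Lemma~\ref{lem:UniformVisits} together with the $pv^d$-periodicity of each $\tilde{x}_p$ to fold the path into a bounded window; a cell of $y$ visited infinitely often by the extracted path then contradicts asymptotic nilpotency at that cell.
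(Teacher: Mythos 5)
Your reduction to mortality of finite configurations via Lemma~\ref{lem:FiniteMortality}, and your observation that a $pv^d$-periodized finite configuration dies in $N_p$ steps by the inductive hypothesis, both match the paper. You have also correctly located the crux: one needs some $p$ with $p > 2(R + N_p r)$, while $N_p$ depends on $p$ in an uncontrolled way. But your proposed resolution of this is a genuine gap, not a routine verification. The contradiction argument you sketch --- take witnesses $\tilde{x}_p$ surviving $N_p - 1$ steps, extract a backward path of nonzero cells, recenter, and pass to a limit $y$ --- does not contradict asymptotic nilpotency, because that property only constrains \emph{fixed} cells: a path of nonzero cells is free to march off to infinity (think of a lone glider), in which case no cell of $y$ is visited infinitely often. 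Periodicity in the direction $v^d$ folds only that one coordinate, and only modulo $p$, which tends to infinity with $p$; the $d-1$ transverse coordinates of the path are not confined at all. Lemma~\ref{lem:UniformVisits} likewise only says that a fixed ball is simultaneously zero at uniformly bounded intervals; it does not prevent the path from passing through a moving window. This escape-to-infinity phenomenon is exactly what makes even the one-dimensional Theorem~\ref{thm:1DCase} nontrivial, so it cannot be absorbed into a limit argument of the kind used for Proposition~\ref{prop:WeakIsStrong}.

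The paper routes around this obstacle differently: instead of bounding $N_p$, it proves a \emph{spatial confinement} statement, namely that there is a uniform $k$ such that the whole evolution of any finite configuration stays in the tower of width $k$ around its support (equation~\eqref{eq:supps}). If this failed for every $k$, one could choose finite patterns whose evolutions reach points arbitrarily far (horizontally) from their supports, translate them so that these evolutions reach the \emph{origin} from farther and farther away, periodize each one vertically to make it mortal, and superpose them so that each new piece arrives at the origin only after all previously placed pieces have died; the limit configuration is then nonzero at the origin at infinitely many times, contradicting asymptotic nilpotency at a single fixed cell. Activity is thus accumulated at one fixed cell by importing patterns from far away, rather than by following activity wherever it goes. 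Confinement in two independent directions then traps the evolution of a finite configuration in a fixed finite box, and asymptotic nilpotency applied to the finitely many cells of that box yields mortality directly, so no comparison between $p$ and $N_p$ is ever needed. If you wish to keep your skeleton, this confinement lemma is the missing ingredient you would have to supply.
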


\begin{proof}
By Theorem~\ref{thm:1DCase}, we know that asymptotic nilpotency implies nilpotency in the case $d = 1$. We will prove the case $d = 2$, and informally explain how the general case is proved. So let $c : S^{\Z^2} \to S^{\Z^2}$ be asymptotically nilpotent with radius $r$. Let further $X = S^{\Z^2}$ and let $v^1 = (1, 0)$ and $v^2 = (0, 1)$ be the standard basis vectors of $\Z^2$. First, we note that $c$ is nilpotent on $X(p v^2)$ for all $p$: the one-dimensional CA $c_{2, p}$ is asymptotically nilpotent on the one-dimensional full shift $\phi_{2, p}(X(p v^2)) = (S^p)^\Z$, and thus nilpotent by Theorem~\ref{thm:1DCase}, implying that $c$ is nilpotent on $X(p v^2)$.

Now, the basis of the proof is the following observation: given any finite configuration, if we `add a vertical period', it becomes mortal. That is, let $x$ be a finite configuration, and let $\supp(x) \subset B_\ell((0, 0))$ for some $k$. Then, if $m \geq 2\ell + 1$, the configuration $\sum_{i \in \Z} \sigma^{imv^2}(x) \in S^{\Z^d}$ is well-defined, and it is mortal because it is vertically periodic, by the argument of the previous paragraph. Then, if there exist finite patterns extending arbitrarily far in the directions spanned by $v^1$, we can use argumentation similar to that of \cite{GuGa08} with `horizontally finite' points (that is, vertically periodic points which use only finitely many columns) to find a contradiction to asymptotic nilpotency.

Let us make this more precise. We first claim that there exists $k$ such that for all finite configurations $x$, we have
\begin{equation}
\label{eq:supps}
\bigcup_{i \in \N} \supp(c^i(x)) \subset \tower(2, k, \supp(x)).
\end{equation}
We show that if this does not hold, we can construct a configuration that contradicts asymptotic nilpotency, so assume that for all $k$, the finite configuration $x^k \in S^{\Z^2}$ is a counterexample to \eqref{eq:supps} for $k$, and for all $k$, let
\[ u^k \in (\bigcup_{i \in \N} \supp(c^i(x^k))) \setminus \tower(2, k, \supp(x^k)). \]

Using the hypothetical $x^k$ and $u^k$, we inductively construct a configuration $x$ where $c^j(x)_{(0, 0)} \neq 0$ for arbitrarily large $j$. For an illustration of the inductive step and what $y^1$, $y^2$ and $y^3$ might look like in what follows, see figures~\ref{fig:ProofA}, \ref{fig:ProofB}, \ref{fig:ProofC} and~\ref{fig:ProofD}. Let $y^0$ be the all zero configuration, and take as the induction hypothesis that
\begin{itemize}
\item $y^i$ is a mortal vertically periodic configuration,
\item $y^i$ has all of its nonzero cells in a finite amount of columns,
\item the trace support of $y^i$ is larger than that of $y^{i-1}$, if $i > 0$.
\end{itemize}
Now, let us construct $y^{i+1}$ assuming $y^i$ satisfies the induction hypothesis. Let $m$ be such that $y^i$ becomes zero in at most $m+1$ steps, and that $\supp(c^n(y^i)) \subset \tower(2, m, (0, 0))$ for all $n$. Such $m$ exists because $y^i$ is mortal and has its nonzero cells in finitely many columns. Then for $k = (r + 1)m + 2r + 1$, we have that for $y^i + \sigma^{u^k}(x^k)$, two nonzero cells arising from $y^i$ and $\sigma^{u^k}(x^k)$, respectively, are never seen in the same neighborhood, since the configuration $y^i$ dies before it is reached by the nonzero cells evolving from $\sigma^{u^k}(x^k)$. More precisely, we have 
\[ c^j(y^i + \sigma^{u^k}(x^k)) = c^j(y^i) + c^j(\sigma^{u^k}(x^k)), \]
for all $j \in \N$.

We thus see that, at the origin, the nonzero values arising from $y^i$ are followed by those arising from $\sigma^{u^k}(x^k)$ in the evolution of $c$, and by the assumption on $x^k$ and $u^k$, the trace support increases in cardinality by at least $1$. By adding a vertical period for $\sigma^{u^k}(x^k)$, obtaining a configuration $y$, we see that $y^{i+1} = y^i + y$ is mortal. Furthermore, if the period of $y$ is chosen large enough that the relevant initial part of the trace is not changed, the trace support of $y^{i+1}$ is a proper superset of that of $y^i$, and it thus satisfies the induction hypothesis. The configuration $x = \lim_i y^i$ now contradicts asymptotic nilpotency, which concludes the proof that for some $k$, \eqref{eq:supps} holds for all finite $x$.

This means that no finite configuration can extend arbitrarily far in the horizontal directions, and with an analogous proof we see that no finite configuration can extend arbitrarily far vertically either. It is now easy to see that every finite configuration is in fact mortal, and Lemma~\ref{lem:FiniteMortality} concludes the proof.

Now, consider the general case $d > 1$. We can prove this in two ways, either reducing directly to the case $d = 1$ or proceeding by induction on $d$. To reduce to the case $d - 1$, we take any basis vector $v$ and, analogously to the case $d = 2$, prove that a finite pattern can only expand arbitrarily far in the directions spanned by $v$, using our previous argument with the $y^i$ obtained from finite configurations by adding period $pv$, which are mortal by the induction hypothesis on $d - 1$, by using the homeomorphisms $\phi_{j, p}$ where $v = v_j$. Now, a finite configuration cannot expand arbitrarily far in any direction (since towers given by any two distinct base vectors have a finite intersection), and Lemma~\ref{lem:FiniteMortality} applies.

To reduce to the case $d = 1$ (Theorem~\ref{thm:1DCase}) directly, we can, for all basis vectors $v$, extend finite patterns into configurations with $d - 1$ (that is, all but $v$) directions of periodicity. Then, running $c$ on such a configuration simulates a one-dimensional asymptotically nilpotent cellular automaton, which is then nilpotent, and we can show using our previous argument that a finite pattern cannot extend arbitrarily far in the directions spanned by $v$. Now, going through all basis vectors $v$, we see that finite configurations are mortal, and Lemma~\ref{lem:FiniteMortality} again applies.
\end{proof}

\newcommand{\sq}[2]{
	\fill[draw=black,color=black] (#1,#2) rectangle (#1+1,#2+1);
}

\begin{figure}
\begin{center}
\begin{tikzpicture}[scale = 0.35]
\draw[fill,black!30] (0,-4) rectangle (1,12);
\draw[fill,black!30] (-4,0) rectangle (12,1);

\sq{6}{7};
\sq{5}{6};
\sq{5}{5};
\sq{6}{5};
\sq{7}{5};

\draw[black!50] (-4,-4) grid (12,12);

\node[draw, fill, circle, scale=0.3] (origin) at (0.5,0.5) {};
\draw[->, black] (5.5,5.5) -- (origin);
\end{tikzpicture}
\end{center}
\caption{A finite pattern $P_1$ which reaches the origin in finite time in the action of an asymptotically nilpotent CA $c$. We only use a Game of Life glider for illustrative purposes, as GoL is not asymptotically nilpotent.}
\label{fig:ProofA}
\end{figure}
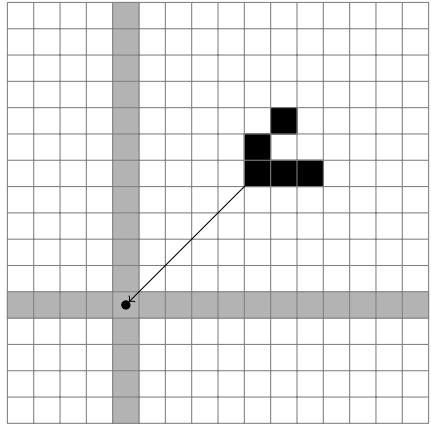

\begin{figure}
\begin{center}
\begin{tikzpicture}[scale = 0.14]
\draw[black!30,thick] (0.5,-17) -- (0.5,30);
\draw[black!30,thick] (-4,0.5) -- (12,0.5);

\foreach \i in {-18, -9, 0, 9, 18} {
	\sq{6}{7+\i};
	\sq{5}{6+\i};
	\sq{5}{5+\i};
	\sq{6}{5+\i};
	\sq{7}{5+\i};
}

\draw[white] (-4,-17) grid (12,30);
\draw (-4,-17) rectangle (12,30);

\node[draw, fill, circle, scale=0.3] (origin) at (0.5,0.5) {};
\draw[->, black] (5.5,5.5) -- (origin);
\end{tikzpicture}
\end{center}
\caption{The pattern $P_1$ of Fig.~\ref{fig:ProofA} with an added vertical period, resulting in a possible choice for the configuration $y^1$ in the proof of Theorem~\ref{thm:dDCase}. The 2D CA $c$ will simulate an asymptotically nilpotent 1D CA on this configuration, so the configuration must be mortal by Theorem~\ref{thm:1DCase}.}
\label{fig:ProofB}
\end{figure}
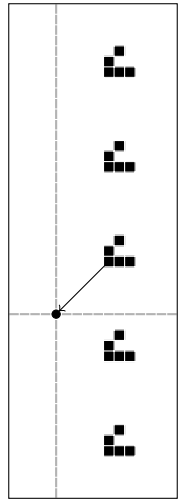

\begin{figure}
\begin{center}
\begin{tikzpicture}[scale = 0.14]
\draw[black!30,thick] (0.5,-17) -- (0.5,47);
\draw[black!30,thick] (-60,0.5) -- (12,0.5);

\foreach \i in {-18, -9, 0, 9, 18, 27, 36} {
	\sq{6}{7+\i};
	\sq{5}{6+\i};
	\sq{5}{5+\i};
	\sq{6}{5+\i};
	\sq{7}{5+\i};
}

\fill[draw=black,color=black] (-56,38) rectangle (-55,39);
\fill[draw=black,color=black] (-56,39) rectangle (-55,40);
\fill[draw=black,color=black] (-55,38) rectangle (-54,39);
\fill[draw=black,color=black] (-55,39) rectangle (-54,40);
\fill[draw=black,color=black] (-46,37) rectangle (-45,38);
\fill[draw=black,color=black] (-46,38) rectangle (-45,39);
\fill[draw=black,color=black] (-46,39) rectangle (-45,40);
\fill[draw=black,color=black] (-45,36) rectangle (-44,37);
\fill[draw=black,color=black] (-45,40) rectangle (-44,41);
\fill[draw=black,color=black] (-44,35) rectangle (-43,36);
\fill[draw=black,color=black] (-44,41) rectangle (-43,42);
\fill[draw=black,color=black] (-43,35) rectangle (-42,36);
\fill[draw=black,color=black] (-43,41) rectangle (-42,42);
\fill[draw=black,color=black] (-42,38) rectangle (-41,39);
\fill[draw=black,color=black] (-41,36) rectangle (-40,37);
\fill[draw=black,color=black] (-41,40) rectangle (-40,41);
\fill[draw=black,color=black] (-40,37) rectangle (-39,38);
\fill[draw=black,color=black] (-40,38) rectangle (-39,39);
\fill[draw=black,color=black] (-40,39) rectangle (-39,40);
\fill[draw=black,color=black] (-39,38) rectangle (-38,39);
\fill[draw=black,color=black] (-36,39) rectangle (-35,40);
\fill[draw=black,color=black] (-36,40) rectangle (-35,41);
\fill[draw=black,color=black] (-36,41) rectangle (-35,42);
\fill[draw=black,color=black] (-35,39) rectangle (-34,40);
\fill[draw=black,color=black] (-35,40) rectangle (-34,41);
\fill[draw=black,color=black] (-35,41) rectangle (-34,42);
\fill[draw=black,color=black] (-34,38) rectangle (-33,39);
\fill[draw=black,color=black] (-34,42) rectangle (-33,43);
\fill[draw=black,color=black] (-32,37) rectangle (-31,38);
\fill[draw=black,color=black] (-32,38) rectangle (-31,39);
\fill[draw=black,color=black] (-32,42) rectangle (-31,43);
\fill[draw=black,color=black] (-32,43) rectangle (-31,44);
\fill[draw=black,color=black] (-22,40) rectangle (-21,41);
\fill[draw=black,color=black] (-22,41) rectangle (-21,42);
\fill[draw=black,color=black] (-21,40) rectangle (-20,41);
\fill[draw=black,color=black] (-21,41) rectangle (-20,42);

\draw[white] (-60,-17) grid (12,47);
\draw (-60,-17) rectangle (12,47);

\node[draw, fill, circle, scale=0.3] (origin) at (0.5,0.5) {};
\draw[->, black] (5.5,5.5) -- (origin);
\draw[->, black] (-38.5,38.5) -- (origin);
\end{tikzpicture}
\end{center}
\caption{The mortal configuration $y^1$ of Fig.~\ref{fig:ProofB} with a new finite pattern $P_2$ added on the left. The pattern $P_2$ eventually reaches the origin in the action of $c$, but it is far enough away that the copies of $P_1$ die out before $P_2$ reaches them.}
\label{fig:ProofC}
\end{figure}
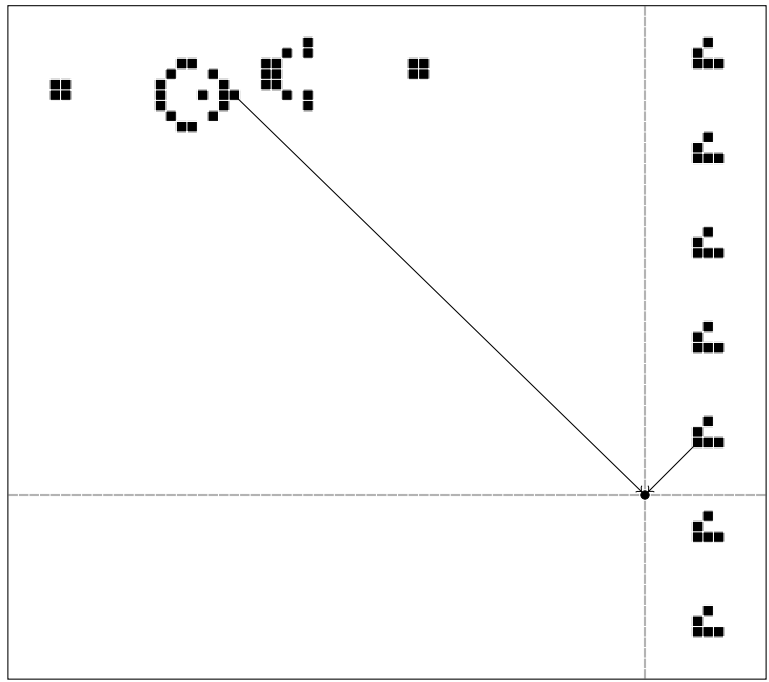

\begin{figure}
\begin{center}
\begin{tikzpicture}[scale = 0.1]
\foreach \i in {-36, -27, -18, -9, 0, 9, 18, 27, 36} {
	\sq{6}{7+\i};
	\sq{5}{6+\i};
	\sq{5}{5+\i};
	\sq{6}{5+\i};
	\sq{7}{5+\i};
}

\foreach \i in {0,-30,-60} {
	\fill[draw=black,color=black] (-56,38+\i) rectangle (-55,39+\i);
	\fill[draw=black,color=black] (-56,39+\i) rectangle (-55,40+\i);
	\fill[draw=black,color=black] (-55,38+\i) rectangle (-54,39+\i);
	\fill[draw=black,color=black] (-55,39+\i) rectangle (-54,40+\i);
	\fill[draw=black,color=black] (-46,37+\i) rectangle (-45,38+\i);
	\fill[draw=black,color=black] (-46,38+\i) rectangle (-45,39+\i);
	\fill[draw=black,color=black] (-46,39+\i) rectangle (-45,40+\i);
	\fill[draw=black,color=black] (-45,36+\i) rectangle (-44,37+\i);
	\fill[draw=black,color=black] (-45,40+\i) rectangle (-44,41+\i);
	\fill[draw=black,color=black] (-44,35+\i) rectangle (-43,36+\i);
	\fill[draw=black,color=black] (-44,41+\i) rectangle (-43,42+\i);
	\fill[draw=black,color=black] (-43,35+\i) rectangle (-42,36+\i);
	\fill[draw=black,color=black] (-43,41+\i) rectangle (-42,42+\i);
	\fill[draw=black,color=black] (-42,38+\i) rectangle (-41,39+\i);
	\fill[draw=black,color=black] (-41,36+\i) rectangle (-40,37+\i);
	\fill[draw=black,color=black] (-41,40+\i) rectangle (-40,41+\i);
	\fill[draw=black,color=black] (-40,37+\i) rectangle (-39,38+\i);
	\fill[draw=black,color=black] (-40,38+\i) rectangle (-39,39+\i);
	\fill[draw=black,color=black] (-40,39+\i) rectangle (-39,40+\i);
	\fill[draw=black,color=black] (-39,38+\i) rectangle (-38,39+\i);
	\fill[draw=black,color=black] (-36,39+\i) rectangle (-35,40+\i);
	\fill[draw=black,color=black] (-36,40+\i) rectangle (-35,41+\i);
	\fill[draw=black,color=black] (-36,41+\i) rectangle (-35,42+\i);
	\fill[draw=black,color=black] (-35,39+\i) rectangle (-34,40+\i);
	\fill[draw=black,color=black] (-35,40+\i) rectangle (-34,41+\i);
	\fill[draw=black,color=black] (-35,41+\i) rectangle (-34,42+\i);
	\fill[draw=black,color=black] (-34,38+\i) rectangle (-33,39+\i);
	\fill[draw=black,color=black] (-34,42+\i) rectangle (-33,43+\i);
	\fill[draw=black,color=black] (-32,37+\i) rectangle (-31,38+\i);
	\fill[draw=black,color=black] (-32,38+\i) rectangle (-31,39+\i);
	\fill[draw=black,color=black] (-32,42+\i) rectangle (-31,43+\i);
	\fill[draw=black,color=black] (-32,43+\i) rectangle (-31,44+\i);
	\fill[draw=black,color=black] (-22,40+\i) rectangle (-21,41+\i);
	\fill[draw=black,color=black] (-22,41+\i) rectangle (-21,42+\i);
	\fill[draw=black,color=black] (-21,40+\i) rectangle (-20,41+\i);
	\fill[draw=black,color=black] (-21,41+\i) rectangle (-20,42+\i);
}

\fill[draw=black,color=black] (81,-2) rectangle (82,-1);
\fill[draw=black,color=black] (81,0) rectangle (82,1);
\fill[draw=black,color=black] (80,1) rectangle (81,2);
\fill[draw=black,color=black] (79,1) rectangle (80,2);
\fill[draw=black,color=black] (78,1) rectangle (79,2);
\fill[draw=black,color=black] (77,1) rectangle (78,2);
\fill[draw=black,color=black] (77,0) rectangle (78,1);
\fill[draw=black,color=black] (77,-1) rectangle (78,0);
\fill[draw=black,color=black] (78,-2) rectangle (79,-1);

\draw[white] (-60,-35) grid (86,47);
\draw (-60,-35) rectangle (86,47);

\node[draw, fill, circle, scale=0.3] (origin) at (0.5,0.5) {};
\draw[->, black] (5.5,5.5) -- (origin);
\draw[->, black] (-38.5,38.5) -- (origin);
\draw[->, black] (77.5,0.5) -- (origin);
\end{tikzpicture}
\end{center}
\caption{The configuration of Fig.~\ref{fig:ProofC} with an added vertical period ($y^2$ in the proof of Theorem~\ref{thm:dDCase}) together with a new finite pattern $P_3$ on the right. Again, $P_3$ reaches the origin, but sufficiently late that the rest of the configuration has died out before it does. The configuration $y^3$ would be obtained by once again adding a vertical period.}
\label{fig:ProofD}
\end{figure}
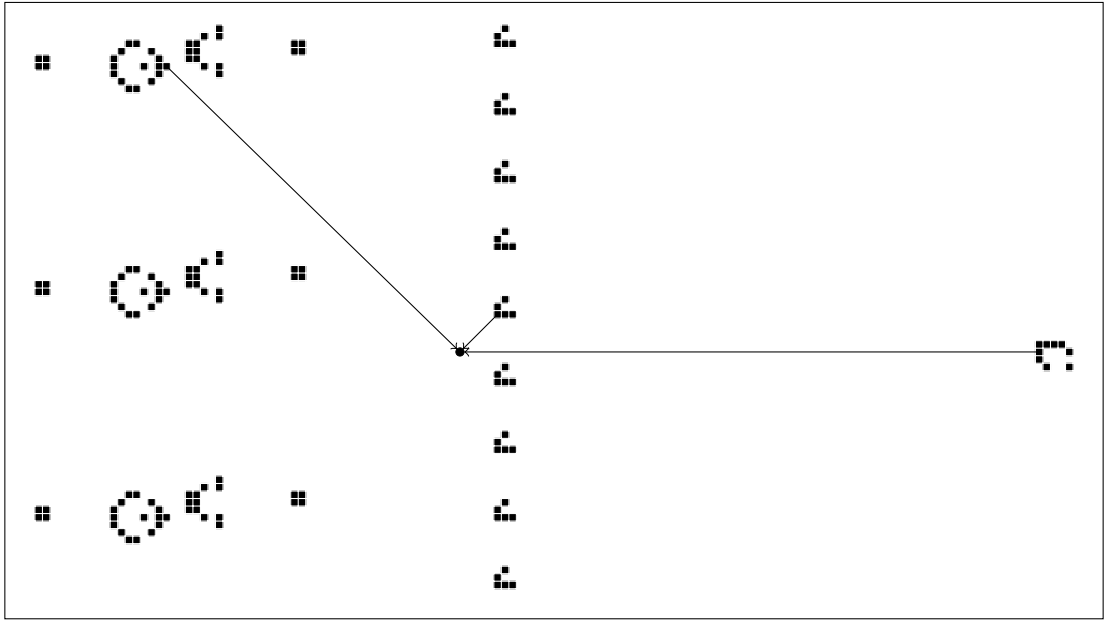

It is hard to imagine a group in which asymptotic nilpotency does not imply nilpotency, leading to the obvious question:

\begin{question}
Does asymptotic nilpotency imply nilpotency on full shifts in all finitely generated groups?
\end{question}

When the full shift is replaced by an arbitrary subshift, this is easily seen not to be the case. For example, asymptotic nilpotency does not imply nilpotency on all one-dimensional sofic shifts: a nontrivial shift action is asymptotically nilpotent but not nilpotent on $\lang^{-1}(0^*10^*)$. In fact, asymptotic nilpotency does not even necessarily imply nilpotency if the sofic shift is mixing:

\begin{example}
Let $X$ be the mixing sofic shift $\lang^{-1}((0^*l0^*r)^*)$ and let $c : X \to X$ be the cellular automaton that moves $l$ to the left and $r$ to the right, removing $l$ and $r$ when they collide. Clearly, $c$ is asymptotically nilpotent but not nilpotent.
\end{example}

We can, however, prove that asymptotic nilpotency implies nilpotency on all SFTs in dimension one, as a corollary of a theorem in \cite{GuGa10}.

\begin{theorem}[Theorem 4 of \cite{GuGa10}]
\label{thm:TransitiveSFTCase}
Asymptotic nilpotency implies nilpotency on transitive $1$-dimensional SFTs.
\end{theorem}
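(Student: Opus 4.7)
The plan is to reduce the theorem to Lemma~\ref{lem:FiniteMortality} by showing, for a transitive $1$-dimensional SFT $X$ and an asymptotically nilpotent $c : X \to X$, that (a) $0$-finite configurations are dense in $X$, and (b) every $0$-finite configuration is mortal. Together with Proposition~\ref{prop:WeakIsStrong}, this yields nilpotency.

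For (a), I would exploit the fact that asymptotic nilpotency forces $0 \in \lang(X)$ and $0^{\Z} \in X$. In a transitive SFT $X$ containing $0^{\Z}$, transitivity provides for every $u \in \lang(X)$ words $w_L, w_R \in \lang(X)$ with $0^M w_L u w_R 0^M \in \lang(X)$, where $M$ is the memory of $X$; the configuration $0^\infty w_L u w_R 0^\infty$ then lies in $X$, is $0$-finite, and contains $u$. As $u$ was arbitrary, such configurations are dense in $X$.

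For (b), I would adapt the strategy behind Theorem~\ref{thm:1DCase}. Suppose for contradiction that a $0$-finite $\xi \in X$ is not mortal. Asymptotic nilpotency applied cell-by-cell forces the nonzero content of $c^n(\xi)$ to escape spatially; letting $R_n$ and $L_n$ be the right- and leftmost nonzero positions of $c^n(\xi)$, at least one of them must diverge, and without loss of generality $R_{n_i} \to +\infty$ along some $n_1 < n_2 < \ldots$. Using transitivity and the existence of $0$-runs of arbitrary length in $\lang(X)$, one can construct a single $\xi^* \in X$ in which successively placed shifted copies of $\xi$ are arranged so that the rightmost nonzero of the $i$-th copy at time $n_i$ lands at the origin, with consecutive copies separated by $0$-buffers of length at least $|\supp(\xi)| + 2 r n_i$, where $r$ is the radius of $c$. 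These long buffers simultaneously render the configuration SFT-legal (using the local nature of forbidden patterns) and guarantee that the forward cones of distinct copies remain disjoint up to time $n_i$, whence $c^{n_i}(\xi^*)_{\vec 0}$ equals the corresponding nonzero value produced by the $i$-th copy of $\xi$. The origin trace of $\xi^*$ therefore contains each $n_i$, contradicting asymptotic nilpotency.

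The main obstacle is the gluing in (b): one must simultaneously align the copies so that their rightmost nonzero reaches the origin at the intended time, produce a legal configuration of $X$, and ensure noninteraction of copies up to those times. Transitivity and the SFT property make all three requirements compatible, but the careful bookkeeping is the part of the argument that truly uses the structure of $X$ beyond what already enters the full-shift argument of \cite{GuGa08}; a coordinate-free formulation could presumably be extracted along the lines of the proof of Theorem~\ref{thm:dDCase}, but for the statement as given the direct construction above suffices.
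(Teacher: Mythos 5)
First, a point of bookkeeping: the paper does not prove this statement at all. It is imported verbatim as Theorem~4 of \cite{GuGa10} and used as a black box (to derive Corollary~\ref{cor:SFTCase}), so there is no internal proof to compare yours against; you are attempting to reprove an external result. Your step (a) is correct: asymptotic nilpotency gives $0^\Z \in X$, and in a transitive SFT containing $0^\Z$ the gluing $0^\infty w_L u w_R 0^\infty$ is legal because every window of length $M+1$ sits inside $0^{M+1}$ or inside $0^M w_L u w_R 0^M$. The overall reduction (dense finite points, plus mortality of finite points, plus Lemma~\ref{lem:FiniteMortality}) is also the right skeleton; it is the one the paper itself uses in higher dimensions.

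The genuine gap is in step (b), at the claim that ``the forward cones of distinct copies remain disjoint up to time $n_i$.'' For copy $i$ to deliver a nonzero value to the origin at time $n_i$ it must lie within distance $rn_i$ of the origin, so its forward cone at time $n_i$ is an interval containing both the origin and the entire support of every copy $j<i$ (which sits strictly between copy $i$ and the origin). The cones therefore necessarily pile up on top of one another near the origin, and no choice of buffers can prevent this. The decomposition $c^{n_i}(\xi^*) = \sum_j c^{n_i}(\mbox{copy } j)$ consequently cannot be justified by a light-cone argument, and the value $c^{n_i}(\xi^*)_{\vec 0}$ may be corrupted by the still-alive residue of earlier copies. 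Every gluing argument in the paper (the proof of Lemma~\ref{lem:FiniteMortality}, and the inductive step in Theorem~\ref{thm:dDCase}) escapes this by exploiting the \emph{mortality} of the already-placed part: it dies before the new pattern's influence arrives. Here the copies are copies of a configuration $\xi$ assumed \emph{not} to be mortal --- mortality is exactly what you are trying to prove --- so that device is unavailable, and nothing rules out, say, leftward-drifting debris from copy $1$ colliding with copy $2$ before time $n_2$. This is precisely why the one-dimensional results of \cite{GuGa08,GuGa10} are nontrivial, as the paper remarks; their actual proofs use a substantially more delicate analysis than disjoint superposition, and your construction as written does not close that gap.
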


\begin{corollary}
\label{cor:SFTCase}
Asymptotic nilpotency implies nilpotency on $1$-dimensional SFTs.
\end{corollary}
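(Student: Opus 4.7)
The plan is to combine Theorem~\ref{thm:TransitiveSFTCase} with the decomposition of a one-dimensional SFT into its maximal transitive (irreducible) sub-SFTs \cite{LiMa95}. Denote these by $X_1, \ldots, X_m$ and set $\bar X = \bigcup_i X_i$. For each $i$ the image $c(X_i)$ is a transitive subshift of $X$ (being a factor of the transitive $X_i$), and hence must be contained in some $X_{j(i)}$, giving a map $j$ on the finite index set $\{1, \ldots, m\}$. Iterating $j$ eventually enters a cycle $X_{i_0} \to \cdots \to X_{i_{p-1}} \to X_{i_0}$, on which $c^p$ restricts to an asymptotically nilpotent cellular automaton on the transitive SFT $X_{i_0}$, and this restriction is nilpotent by Theorem~\ref{thm:TransitiveSFTCase}. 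Chaining back through the cycle and any pre-cycle prefix, one obtains a uniform $M$ with $c^M(\bar X) = \{0^{\Z}\}$.

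Next, by Proposition~\ref{prop:WeakIsStrong} it suffices to show that every $x \in X$ is mortal. The case $x \in \bar X$ is immediate. For a transient $x \in X \setminus \bar X$, the 1D SFT structure forces the left- and right-infinite tails of $x$ to coincide with those of configurations $z^L \in X_i$ and $z^R \in X_j$ for some $i, j$; concretely, $z^L = x$ on $(-\infty, -K]$ and $z^R = x$ on $[K, \infty)$ for some $K$. Applying $c^M$ gives $c^M(z^L) = c^M(z^R) = 0^{\Z}$, and since $c$ has radius $r$, locality forces $c^M(x)_v = 0$ whenever $|v| \geq K + rM$, so $c^M(x)$ is $0$-finite.

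The hardest step is the remaining mortality of $0$-finite configurations in $X$, since finite points need not be dense so Lemma~\ref{lem:FiniteMortality} does not apply directly. The plan is to argue via the limit set $\Omega = \bigcap_n c^n(X)$: if some $0$-finite $y \in X$ were not mortal, asymptotic nilpotency would force the surviving nonzero cells of $c^n(y)$ to escape to infinity, and a diagonal limit of appropriately shifted iterates $\sigma^{v_n}(c^n(y))$ produces a nonzero $y^* \in \Omega$. Combined with $c(\Omega) = \Omega$ and the uniform bound $c^M(\bar X) = \{0^{\Z}\}$, one aims to show $\Omega \subseteq \bar X$, which forces $\Omega = c^M(\Omega) \subseteq \{0^{\Z}\}$ and yields the required contradiction.
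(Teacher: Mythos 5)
Your decomposition into transitive components and the appeal to Theorem~\ref{thm:TransitiveSFTCase} is the same route the paper takes, and the first parts of your argument are sound; in fact your index map $j$ and the cycle argument make explicit a point the paper passes over quickly, namely that $c$ need not map each transitive component into itself. The step showing that $c^M(x)$ is $0$-finite for every $x$ is also correct and matches the paper. The problem is the last step, which you yourself only describe as something ``one aims to show'': the containment $\Omega \subseteq \bar X$ is precisely the content that is missing, and the limit-set detour does not by itself produce it. As written, the proof is incomplete at its decisive point.

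What closes the argument --- and what the paper actually does --- is an elementary observation about one-dimensional SFTs that makes your ``hardest step'' disappear entirely: a configuration of $X$ that is left- and right-asymptotic to $0^\Z$ lies wholly inside the single transitive component containing $0^\Z$. In the graph presentation, such a configuration is a bi-infinite path whose two tails eventually run inside the strongly connected component $C$ carrying the $0$-loop, and any excursion between two vertices of $C$ must stay in $C$, since every vertex on the excursion admits a path from $C$ and a path back to $C$. Hence $c^M(x)$ already belongs to one fixed transitive component, and $c^{2M}(x) = 0^\Z$ uniformly in $x$; this gives nilpotency directly, with no need for Proposition~\ref{prop:WeakIsStrong}, for a separate treatment of $0$-finite points, or for the limit set. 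If you prefer your limit-set phrasing, the same observation is exactly what proves $\Omega \subseteq \bar X$: you already know $\Omega \subseteq c^M(X)$ consists of $0$-finite points, and the observation places every such point in $\bar X$. Either way, you must supply this fact.
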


\begin{proof}
Let $X \subset S^\Z$ be an SFT and let $c : X \to X$ be asymptotically nilpotent. On each transitive component $Y$ of $X$, $c$ is in fact nilpotent by Theorem~\ref{thm:TransitiveSFTCase}, and it is easy to see that the $0$-symbol must be the same for all transitive components. Let $n$ be such that all the finitely many transitive components of $X$ are mapped to $0^\Z$ by $c^n$. Now, consider an arbitrary configuration $x \in X$. The configuration $c^n(x) \in X$ must be both left and right asymptotic to $0^\Z$, that is, $c^n(x)_i = 0$ if $|i|$ is large enough, so by the assumption that $X$ is an SFT, $c^n(x) \in Y$ for some transitive component $Y$ of $X$. But this means $c^{2n}(x) = 0^\Z$.
\end{proof}

Of course, sofic shifts where asymptotic nilpotency does not imply nilpotency exist in any dimension, since they exist in dimension one. However, the case of an SFT seems harder due to the surprisingly complicated nature of multidimensional SFTs. At least if finite configurations are dense in a $d$-dimensional SFT, the proof of Theorem~\ref{thm:dDCase} works rather directly using Corollary~\ref{cor:SFTCase} and Lemma~\ref{lem:FiniteMortality}.

\begin{theorem}
\label{thm:FinitesDense}
If $X \subset S^{\Z^d}$ is an SFT where finite points are dense and $c : X \to X$ is an asymptotically nilpotent cellular automaton, then $c$ is nilpotent.
\end{theorem}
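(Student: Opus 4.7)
The plan is to adapt the argument of Theorem~\ref{thm:dDCase} to the SFT setting, with the aim of showing that every finite configuration in $X$ is mortal; Lemma~\ref{lem:FiniteMortality} then immediately yields nilpotency. A preliminary observation is that, since $X$ has dense finite points and is closed, the all-zero configuration $0^{\Z^d}$ lies in $X$ (obtained by taking any finite point in $X$ and pushing its support to infinity by shifting). This ensures that inserting copies of a finite pattern at any sufficiently separated collection of locations produces a configuration that still satisfies the SFT constraints, since each local window sees at most one copy padded by zeros, and the all-zero pattern is globally legal.

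Next I would establish the SFT analog of the key nilpotency fact underlying Theorem~\ref{thm:dDCase}: for every basis vector $v^j$ and every sufficiently large period $p$, the cellular automaton $c$ is nilpotent on $X(pv^j)$. Rather than proceed by induction on $d$, which would require that $\phi_{j,p}(X(pv^j))$ again has dense finite points (a property that does not obviously descend from $X$), I would use the direct reduction to dimension one employed at the end of the proof of Theorem~\ref{thm:dDCase}: iterate the homeomorphisms $\phi_{j,p}$ in all $d - 1$ remaining basis directions, turning the problem into one about a one-dimensional subshift over an enlarged alphabet, and invoke Corollary~\ref{cor:SFTCase} in place of Theorem~\ref{thm:1DCase}. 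Because $X$ is an SFT, the iterated intersection $X \cap X(p_2 v^2) \cap \cdots \cap X(p_d v^d)$ recoded into $(S^{p_2 \cdots p_d})^{\Z}$ remains a one-dimensional SFT, and $c$ restricts to an asymptotically nilpotent cellular automaton on it, so Corollary~\ref{cor:SFTCase} applies.

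Armed with this nilpotency on periodic sub-subshifts, I would rerun the inductive stacking construction from the proof of Theorem~\ref{thm:dDCase} almost verbatim: if some finite $x \in X$ had its trajectory support escaping every tower $\tower(j, k, \supp(x))$ as $k \to \infty$, one builds a sequence $y^0, y^1, y^2, \ldots$ of mortal, vertically periodic configurations whose trace supports strictly grow, and whose limit in $X$ contradicts asymptotic nilpotency. The only additional care needed relative to the full-shift case is ensuring every $y^i$ lies in $X$: since $X$ is an SFT with finite forbidden patterns and $0^{\Z^d} \in X$, choosing each added period and each horizontal gap to exceed twice the support diameter plus the radius of the forbidden patterns suffices, because every window of the resulting configuration then coincides with a window of some single finite configuration in $X$ padded with zeros.

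Iterating this over every basis direction $v^j$ shows that the trajectory support of any finite configuration is contained in the intersection of towers in all directions, which is bounded. Combined with asymptotic nilpotency applied cell by cell, this forces mortality of every finite configuration, and Lemma~\ref{lem:FiniteMortality} completes the proof. I expect the main obstacle to be the bookkeeping guaranteeing that every configuration built in the stacking construction is legal in $X$; the density hypothesis together with the SFT property is used precisely to keep every intermediate configuration in $X$, and the sofic counterexamples noted just before the theorem show that without some assumption of this flavor the construction must fail.
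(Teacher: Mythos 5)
Your proposal is correct and follows essentially the same route as the paper: reduce nilpotency on the periodic sub-subshifts $X(pv^j)$ to the one-dimensional SFT case via Corollary~\ref{cor:SFTCase}, rerun the tower/stacking argument of Theorem~\ref{thm:dDCase} (using the SFT property to keep the glued and periodized configurations inside $X$), and finish with Lemma~\ref{lem:FiniteMortality}. Your remark that one should reduce directly to dimension one rather than induct on $d$ --- since dense finite points need not pass to $\phi_{j,p}(X(pv^j))$ --- is a careful point that the paper's terse treatment of $d>2$ glosses over.
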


\begin{proof}
The case $d = 1$ follows from Corollary~\ref{cor:SFTCase}. We again only explicitly consider the case $d = 2$, and the proof for $d > 2$ is obtained as in Theorem~\ref{thm:dDCase}. First, we note that the subshift $X(p v^2)$ is mapped through $\phi_{2, p}$ into a $1$-dimensional SFT, so Corollary~\ref{cor:SFTCase} applies. Now, if $x \in X$ is a finite point, the sum $\sum_{i \in \Z} \sigma^{imv^2}(x)$ is defined for all large enough $m$, since $X$ is an SFT. This means that we can prove, using the arguments of the proof of Theorem~\ref{thm:dDCase}, that finite points of $X$ cannot expand arbitrarily far horizontally or vertically, which implies they are mortal. Then, Lemma~\ref{lem:FiniteMortality} implies that $c$ is nilpotent.
\end{proof}

We do not know much about the class of SFTs with dense finite points in dimension $d > 1$. Note that for example, unlike in the case $d = 1$, not every $2$-dimensional transitive SFT with a uniform configuration has this property, $\{x \in \{0, 1\}^{\Z^2} \;|\; x = \sigma^{v^1}(x)\}$ being a trivial example.

\begin{question}
Does asymptotic nilpotency imply nilpotency on all SFTs?
\end{question}

More generally, it would be interesting to see what could be done in the more general framework of projective subdynamics, where the cellular automata are -- in a sense -- nondeterministic. For instance, it would be interesting to see whether techniques similar to ours would help in extending Theorem 6.4 of \cite{PaSc10} to dimensions higher than $2$ at least in some natural subcases.

\section*{Acknowledgements}

I would like to thank Ilkka T\"orm\"a for his help with (naturally unsuccessfully) trying to find a counterexample to Theorem~\ref{thm:dDCase}, Jarkko Kari for the proof of Proposition~\ref{prop:WeakIsStrong}, and Pierre Guillon for shedding light on the relevant generalizations of this problem, and on other possible uses of this work. And of course, I would like to thank all three and the anonymous referees of AUTOMATA \& JAC~2012 for their comments on this article. The pattern $P_2$ in Figure~\ref{fig:ProofC} is due to Bill Gosper \cite{WPGun12}. The patterns $P_1$ and $P_3$ are attributed to Rikhard K. Guy and John Conway respectively [folklore].


\def\ocirc#1{\ifmmode\setbox0=\hbox{$#1$}\dimen0=\ht0\advance\dimen0
  by1pt\rlap{\hbox
  to\wd0{\hss\raise\dimen0\hbox{\hskip.2em$\scriptscriptstyle\circ$}\hss}}#1\e%
lse{\accent"17 #1}\fi}

\end{document}